\newcommand{\N}{\mathbb{Z}^+}
\newcommand{\Q}{\mathbb{Q}}
\newcommand{\Z}{\mathbb{Z}}
\newcommand{\cS}{\mathcal{S}}
\newcommand{\ri}{\mathcal{O}}
\newcommand{\fr}{\mathfrak}
\newtheorem{thm}{Theorem}
\newtheorem{lemma}[thm]{Lemma}
\begin{document}

\title{On the Diophantine equation $NX^2 + 2^{L}3^{M} = Y^N$}

\author{Eva G. Goedhart\thanks{Department of Mathematics, Bryn Mawr College, Bryn Mawr, PA 19010} \and Helen G. Grundman\thanks{Department of Mathematics, Bryn Mawr College, Bryn Mawr, PA 19010}}
\date{\today}

\maketitle

\begin{abstract}
We prove that the Diophantine equation 
$NX^2 + 2^L 3^M = Y^N$ has no solutions $(N,X,Y,L,M)$
in positive integers with $N > 1$ and $\gcd(NX,Y) = 1$, 
generalizing results of Luca, Wang and Wang, 
and Luca and Soydan. Our proofs use results
of Bilu, Hanrot, and Voutier on defective Lehmer pairs.
\end{abstract}\vspace{1pc}

\section{Introduction}

In this work, we prove the following theorem.

\begin{thm}
\label{mainthm}
The equation
\begin{equation}
\label{maineq}
NX^2 + 2^{L}3^{M} = Y^N,
\end{equation}
has no solution with $N$, $X$, $Y$, $L$, $M\in\N$, $N > 1$,
and $\gcd(NX,Y) = 1$.
\end{thm}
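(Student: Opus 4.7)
My approach proceeds in three stages: first, elementary congruences force $\gcd(N, 6) = 1$, so $N \geq 5$; second, the equation factors as a norm in an imaginary quadratic field, and class-group and unit bookkeeping converts it into an identity of the form $EF + X\sqrt{-NE} = \lambda\gamma^N$; third, this identity yields a Lehmer pair to which the Bilu--Hanrot--Voutier theorem applies.

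Since $L, M \geq 1$, both $2$ and $3$ divide $2^L 3^M$. Reducing (\ref{maineq}) modulo $2$ and modulo $3$ and using $\gcd(NX, Y) = 1$ shows that if $2 \mid N$ or $3 \mid N$, then $2$ or $3$ divides $Y$, a contradiction; hence $\gcd(N, 6) = 1$ and in particular $N \geq 5$. The same congruences combined with $\gcd(X, Y) = 1$ give $\gcd(XY, 6N) = 1$.

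Now write $2^L 3^M = EF^2$ with $E \in \{1, 2, 3, 6\}$ squarefree. Multiplying (\ref{maineq}) by $E$ yields
\[ (EF + X\sqrt{-NE})(EF - X\sqrt{-NE}) = E \cdot Y^N \]
in $K = \Q(\sqrt{-NE})$. With $\alpha = EF + X\sqrt{-NE}$, the ideal $\gcd((\alpha), (\bar\alpha))$ divides both $(\alpha + \bar\alpha) = (2EF)$ and $(\alpha - \bar\alpha) = (2X\sqrt{-NE})$, so by the coprimality conclusions it is supported only on the primes of $K$ above $2$ and $3$. A case analysis for each $E \in \{1, 2, 3, 6\}$ and for the ramification at these primes yields $(\alpha) = \mathfrak{e} \cdot \mathfrak{a}^N$, where $\mathfrak{e}$ is an explicit ideal of bounded norm and $\mathfrak{a}$ is integral. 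Assuming $\gcd(N, h_K) = 1$ (rare exceptions bounded and checked directly), and accounting for the units of $K$ (nontrivial only when $K \in \{\Q(i), \Q(\sqrt{-3})\}$, which are handled separately), one obtains
\[ EF + X\sqrt{-NE} = \lambda \cdot \gamma^N, \qquad \gamma \in \mathcal{O}_K, \]
with $\lambda$ restricted to an explicit finite list.

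Finally, $(\gamma, \bar\gamma)$ is (up to rational rescaling) a Lehmer pair, and its $N$-th Lehmer number equals $X/F$ times a factor supported on primes dividing $6NE$. The Bilu--Hanrot--Voutier theorem guarantees a primitive prime divisor of the $N$-th Lehmer number of every Lehmer pair for $N > 30$, contradicting the form of this pair. For the residual exponents $N \in \{5, 7, 11, 13, 17, 19, 23, 25, 29\}$, the complete tables of defective Lehmer pairs in the same paper must be compared against the possible $(\gamma, \lambda)$ and each entry excluded by substitution into (\ref{maineq}). I expect the main obstacle to be step (ii): controlling the ideal $\mathfrak{e}$ and the units tightly enough so that the list of possible $\lambda$ is short. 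A secondary concern is the small-$N$ verification, which must include composite exponents such as $N = 25$, since the presence of $N$ as a coefficient means the equation does not reduce to a prime-exponent problem.
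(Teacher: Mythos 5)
Your overall strategy---force $\gcd(N,6)=1$, factor over an imaginary quadratic field, extract an essentially $N$-th power via the class group, and finish with Bilu--Hanrot--Voutier plus Voutier's tables of defective Lehmer pairs---is the same as the paper's, but there is a genuine gap at the decisive step. With the pair $(\gamma,\overline{\gamma})$ and $N$ odd, the $N$-th Lehmer number $(\gamma^N-\overline{\gamma}^N)/(\gamma-\overline{\gamma})$ is a ratio of $\sqrt{-NE}$-components, and the $\sqrt{-NE}$-component of $\lambda\gamma^N=EF+X\sqrt{-NE}$ is $X$; so, as you yourself write, your $L_N$ carries the factor $X$. The prime divisors of $X$ are completely unconstrained, so you cannot show that each of them divides $(\gamma^2-\overline{\gamma}^2)^2L_1\cdots L_{N-1}$, i.e.\ you cannot show the pair is $N$-defective, and BHV yields no contradiction. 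The quantity that is actually smooth is the rational part $EF$, so the Lehmer number you use must be a ratio of \emph{those} components: either take the pair $(\gamma,-\overline{\gamma})$, for which $L_N=(\gamma^N+\overline{\gamma}^N)/(\gamma+\overline{\gamma})$ is supported on $\{2,3\}$ (this is what the paper does when $L,M$ are both even, where it in fact shows $L_t=\pm1$), or pass to $L_{2N}(\gamma,\overline{\gamma})=L_N\cdot(\gamma^N+\overline{\gamma}^N)/(\gamma+\overline{\gamma})$, where the $X$-part is harmless because $L_N$ itself occurs among the earlier terms $L_1,\dots,L_{2N-1}$ (this is what the paper does when $E>1$, at the price that BHV then only bounds $2t\le 30$ and Voutier's table is needed to push down further).

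Two further points are deferred in your sketch but constitute most of the actual work. First, $\gcd(N,h_K)=1$ is not an assumption you may make: when it fails you obtain $\alpha$ as a unit times a $t$-th power for a proper divisor $t$ of $N$ (or of $2N$), and the case $t=1$, i.e.\ $2N\mid h_K$, must be eliminated by the analytic bound $h_K\le\frac{2\sqrt{|d|}}{\pi}\left(1+\log\sqrt{|d|}\right)$ (which forces $N\le 50$) followed by a class-number table check; ``rare exceptions checked directly'' conceals a real argument, and the intermediate values of $t$ still require the full Lehmer analysis. Second, your ideal $\mathfrak{e}$ (the ramified part above $E$) need not be principal and your $\lambda$ need not lie in $\mathcal{O}_K$; the paper sidesteps this by setting $a=F\sqrt{E}+X\sqrt{-N}$ in the biquadratic field $\Q(\sqrt{E},\sqrt{-N})$, so that $a\overline{a}=Y^N$ exactly and $a^2$ generates the $2N$-th power of an ideal of $\Q(\sqrt{-NE})$ with no correction factor. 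Finally, the surviving exponent ($t=5$) is not excluded by table lookup or by substitution into the original equation, but by explicit congruences modulo $8$ and $32$ applied to the binomial expansions of $\gamma^5$ and $\alpha^5$; that computation is unavoidable and is absent from your outline.
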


Equation~(\ref{maineq}) is a variation of the equation
$NX^2 + 2^K = Y^N$ studied by
Wang and Wang~\cite{WW11} and by
Luca and Soydan~\cite{LS12} and of the equation
$X^2 + 2^L3^M = Y^N$ studied by
Luca~\cite{Lu02}.
Our proofs draw upon ideas from each of these papers.

We begin by showing that it suffices to prove Theorem~\ref{mainthm} in 
the case where $N$ is square-free, and by reviewing a needed result on
Lehmer pairs.
In Section~\ref{evensec}, we prove the special case of Theorem~\ref{mainthm}
in which both
of the exponents $L$ and $M$ are assumed to be even.  Then in
Section~\ref{oddsec}, we prove the remaining cases,
thus completing the proof of Theorem~\ref{mainthm}.

\begin{lemma}
\label{sqfree}
If there exists a solution to $NX^2 + 2^{L}3^{M} = Y^N$
as in Theorem~\ref{mainthm}, then there exists a solution with
the same values of $L$ and $M$, but 
with $N$ square-free.
\end{lemma}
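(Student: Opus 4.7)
The plan is to construct, from any solution $(N, X, Y, L, M)$ as in Theorem~\ref{mainthm}, a solution of the same equation with the same $L$ and $M$ but with $N$ square-free. To that end, I would factor $N$ uniquely as $N = d^2 e$ with $d$, $e$ positive integers and $e$ square-free, and examine the candidate transformation
\[
(N, X, Y, L, M)\ \longmapsto\ (e,\, dX,\, Y^{d^2},\, L,\, M).
\]

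First I would verify that this is indeed a solution of Equation~(\ref{maineq}): a direct computation gives
\[
e\,(dX)^2 + 2^L 3^M = d^2 e\, X^2 + 2^L 3^M = NX^2 + 2^L 3^M = Y^N = (Y^{d^2})^e.
\]
Next I would check the coprimality condition. Since $e\cdot dX = (N/d)X$ divides $NX$, every prime divisor of $e\, dX$ divides $NX$ and is therefore coprime to $Y$, hence coprime to $Y^{d^2}$; so $\gcd(e\, dX,\ Y^{d^2}) = 1$ follows from $\gcd(NX,Y)=1$.

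When $e > 1$, this already delivers a solution in the sense of Theorem~\ref{mainthm} with square-free $N' = e > 1$. The only obstacle is the degenerate case $e = 1$, i.e., when $N = d^2$ is a perfect square with $d \geq 2$, since then the recipe above would produce $N' = 1$, which is not permitted. In this case the equation rewrites as $(dX)^2 + 2^L 3^M = (Y^d)^d$, a special instance of the equation $X^2 + 2^L 3^M = Y^n$ studied by Luca in~\cite{Lu02}; I would invoke his results to obtain the desired contradiction (so that in fact $e>1$ always holds), or failing that, to pass to a smaller square-free exponent directly. Verifying that this last step can always be made is where the real work lies; the non-perfect-square case is purely a bookkeeping computation.
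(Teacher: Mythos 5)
Your decomposition $N = d^2 e$ with $e$ square-free and the transformation $(N,X,Y,L,M)\mapsto(e,dX,Y^{d^2},L,M)$ are exactly the paper's argument (there $n = uv^2$ and the new solution is $(u,vx,y^{v^2},\ell,m)$), and your verification of the equation and of $\gcd(e\,dX,Y^{d^2})=1$ in the case $e>1$ is correct. The genuine gap is the case $e=1$, which you flag but do not close, and your proposed route into it has two concrete problems. First, you rewrite the equation as $(dX)^2+2^L3^M=(Y^d)^d$, with exponent $d$; but $d$ can equal $2$, and Luca's theorem on $x^2+2^a3^b=y^n$ concerns exponents $n\ge 3$ (indeed $x^2+2^a3^b=y^2$ has solutions, e.g.\ $1+2^43=7^2$), so this formulation can fail to produce a contradiction. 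The fix, which is what the paper does, is to keep the exponent as $N=d^2\ge 4$: the solution to feed to Luca is $(dX)^2+2^L3^M=Y^{N}$ with $\gcd(dX,Y)=1$.

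Second, even once Luca's theorem applies, it does not give an outright contradiction: it leaves the exceptional exponents $3$ and $4$, and you need a reason these cannot occur. The paper supplies it by first observing that $L,M\ge 1$ forces $\gcd(N,6)=1$: since $2^L3^M\equiv 0\pmod 6$, any prime $p\in\{2,3\}$ dividing $N$ would divide $NX^2+2^L3^M=Y^N$ and hence $Y$, contradicting $\gcd(NX,Y)=1$. This observation is absent from your write-up, and without it the degenerate case is not finished; with it, $N=d^2$ cannot be $3$ or $4$, so $e=1$ is impossible and your main construction always applies. (Your fallback of ``passing to a smaller square-free exponent directly'' is vacuous when $e=1$, since the factorization offers no square-free exponent greater than $1$.)
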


\begin{proof}
Suppose that $(N,X,Y,L,M) = (n,x,y,\ell,m)$ is a solution to
$NX^2 + 2^{L}3^{M} = Y^N$, with
$n$, $x$, $y$, $\ell$, $m\in\N$, $n > 1$, 
and $\gcd(nx,y) = 1$.
Note that $\ell$, $m > 0$ implies that $\gcd(n,6) = 1$.

Let $n = uv^2$, with $u$, $v \in \N$ and $u$ square-free.
Suppose that $u = 1$.
Then $(N,X,Y,L,M) = (n,vx,y,\ell,m)$ is a solution to $X^2 + 2^L3^M = Y^N$
with $N$, $X$, $Y$, $L$, $M \in \N$, $N > 1$, and $\gcd(X,Y) = 1$.
By~\cite[Theorem 2.1]{Lu02}, this implies that
$n = N = 3$ or $4$, contradicting that $\gcd(n,6) = 1$.
Thus $u > 1$.

Now, note that $u(vx)^2 + 2^\ell3^{m} = y^n = (y^{v^2})^u$,
and so $(N,X,Y,L,M) = (u,vx,y^{v^2},\ell,m)$
is a solution to~(\ref{maineq})
with $\gcd(NX,Y) = \gcd(uvx,y^{v^2}) = 1$, and $N = u > 1$.
\end{proof}

A key element in our proofs is the theory of Lehmer sequences and
defective Lehmer pairs, which we now briefly describe. 
For a more detailed introduction, see~\cite{Vo95}.

A pair of algebraic integers $(\gamma,\delta)$ is called a
Lehmer pair if 
$\gamma\delta\in\Z - \{0\}$, $(\gamma+\delta)^2\in\Z - \{0\}$,
$\gcd(\gamma\delta,(\gamma+\delta)^2) = 1$, and 
$\frac{\gamma}{\delta}$ is not a root of unity.
Given a Lehmer pair, $(\gamma,\delta)$, and
$s\in\N$, define
\[L_s(\gamma,\delta)=\left\{\begin{array}{ll}
\frac{\gamma^s-\delta^s}{\gamma-\delta_{\ }}, & \mbox{ if $s$ is odd,}\\
\frac{\gamma^s-\delta^s}{\gamma^2-\delta^2},  & \mbox{ if $s$ is even.}
\end{array}
\right.\]
The Lehmer pair $(\gamma,\delta)$ is $s$-defective if, for each
$p\mid L_s(\gamma,\delta)$, 
\[p\mid 
(\gamma^2 - \delta^2)^2L_1(\gamma,\delta)\dots L_{s-1}(\gamma,\delta).\]

We need the following lemma~\cite[Theorem 1(ii)]{Vo95}.

\begin{lemma}[Voutier]\label{VoThm1}
Let $s\in \N$ such that $6 < s\leq 30$ and 
$s\neq 8$, $10$, or $12$.  
If $(\gamma,\delta)$ is an $s$-defective Lehmer pair, then 
for some $k\in \{0,1,2,3\}$, $i^k\gamma$ is one of the 
values listed in Table~\ref{Votable}.
\end{lemma}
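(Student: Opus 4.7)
Since this is a cited theorem of Voutier, my plan is to sketch how one would (re)prove it rather than repeat the citation. The statement is a \emph{primitive divisor classification}: an $s$-defective Lehmer pair is, up to the discriminant factor, one whose $s$-th Lehmer number $L_s(\gamma,\delta)$ has no prime divisor that is new in the sequence. The goal is to show that in the specified range of $s$, all such pairs come from a short, explicit list.

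The first step would be to use the cyclotomic factorization $L_s(\gamma,\delta)=\prod_{d\in D_s}\Phi_d^*(\gamma,\delta)$, where $\Phi_d^*$ is the homogenized $d$-th cyclotomic polynomial evaluated at $(\gamma,\delta)$ and $D_s$ is the appropriate index set. Defectiveness then forces $\Phi_s^*(\gamma,\delta)$ to have only non-primitive prime factors, and a standard order-modulo-$p$ argument shows that any primitive prime $p$ of $\Phi_s^*$ must satisfy $p\equiv \pm 1\pmod{s}$. Combined with the Archimedean estimate $|\Phi_s^*(\gamma,\delta)|\asymp \max(|\gamma|,|\delta|)^{\varphi(s)}$, this pins down how small the pair must be in order to be defective.

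Next I would invoke the Laurent--Mignotte--Nesterenko lower bounds for linear forms in two logarithms, applied to $\log(\gamma/\delta)$, to obtain an explicit absolute bound on the height of any $s$-defective pair, for each of the finitely many $s\in(6,30]\setminus\{8,10,12\}$. The remaining step is a finite search over all Lehmer pairs $(\gamma,\delta)$ satisfying the height bound: parametrize by the integer pair $((\gamma+\delta)^2,\gamma\delta)$, impose the Lehmer coprimality and non-root-of-unity conditions, and directly test $s$-defectiveness by computing and factoring $L_1,\dots,L_s$. The surviving pairs, normalized by the symmetry $\gamma\mapsto i^k\gamma$, form the entries of Table~\ref{Votable}.

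The main obstacle is the calibration between the two bounds: one needs the linear-forms estimate sharp enough that the enumeration is computationally feasible, while still being rigorously valid for all admissible pairs. The special exclusions $s=8,10,12$ signal that for those values the classification either fails or requires a separate parametric family, so I would expect any attempt at a unified argument to break down precisely at those indices — reinforcing why the range in the hypothesis is stated exactly as it is.
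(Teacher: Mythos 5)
This lemma is not proved in the paper at all: it is imported verbatim as \cite[Theorem 1(ii)]{Vo95}, so there is no internal argument to compare yours against. Judged on its own terms, your outline correctly identifies the shape of Voutier's argument --- the cyclotomic factorization of $L_s(\gamma,\delta)$, the fact that a primitive prime divisor of the $s$-th term must be $\equiv \pm 1 \pmod{s}$, an Archimedean lower bound on $|\Phi_s^*(\gamma,\delta)|$ in terms of the height, and a terminating finite search parametrized by the integers $(\gamma+\delta)^2$ and $\gamma\delta$. Your reading of the exclusions $s=8,10,12$ is also essentially right: for those indices the defective pairs fall into parametric families rather than a finite list, which is why the present authors restrict to $\gcd(t,6)=1$ (forcing $t\in\{1,5,7,\dots\}$) before invoking the table.

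The genuine gap is that the entire content of the lemma is the explicit table, and your sketch never produces it. Every step that would actually yield Table~\ref{Votable} --- deriving a concrete height bound for each $s$, enumerating the finitely many candidate pairs, and testing defectiveness --- is described but not executed, so nothing is verified. There is also a methodological misattribution: the Laurent--Mignotte--Nesterenko lower bounds for linear forms in two logarithms are the engine of the later Bilu--Hanrot--Voutier theorem handling $s>30$; Voutier's 1995 classification for $s\leq 30$ instead reduces the smallness of $|\Phi_s^*(\gamma,\delta)|$ to explicit Thue-type equations of low degree and solves those. Leaning on LMN here would make the ``finite search'' bound astronomically larger than what Voutier actually needs, and the calibration problem you flag at the end would become a real obstruction rather than a nuisance. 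As written, the proposal is a plausible research plan for reproving a cited result, not a proof of it.
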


\begin{table}[hbt] \caption{Possible values of $i^{k}\gamma$ 
in Lemma~\ref{VoThm1}.
\label{Votable}}
\centering
     \begin{tabular}{ | c | c | c | c |}
    \hline
    $s$ & \multicolumn{3}{|c|}{$i^k\gamma$, for $k\in \{0,1,2,3\}$}  \\ 
    \hline
    \multirow{2}{*}{$7$} & $\frac{1 \pm \sqrt{-7}}{2}$ & $ \frac{1 \pm \sqrt{-19}}{2}$ & $ \frac{\sqrt{3} \pm \sqrt{-5}}{2}$\\ \cline{2-4}& $ \frac{\sqrt{5} \pm \sqrt{-7}}{2}$ & $ \frac{\sqrt{13} \pm \sqrt{-3}}{2}$ & $ \frac{\sqrt{14} \pm \sqrt{-22}}{2}$  \\ \hline
    $9$ & $\frac{\sqrt{5} \pm \sqrt{-3}}{2}$ & $ \frac{\sqrt{7} \pm \sqrt{-1}}{2}$ & $ \frac{\sqrt{7} \pm \sqrt{-5}}{2}$ \\ \hline
    $13$ & $\frac{1 \pm \sqrt{-7}}{2}$ & &\\ \hline
   \multirow{2}{*}{$14$} & $\frac{\sqrt{3} \pm \sqrt{-13}}{2}$ & $ \frac{\sqrt{5} \pm \sqrt{-3}}{2}$ & $ \frac{\sqrt{7} \pm \sqrt{-1}}{2}$\\\cline{2-4} & $ \frac{\sqrt{7} \pm \sqrt{-5}}{2}$ & $ \frac{\sqrt{19} \pm \sqrt{-1}}{2}$ & $ \frac{\sqrt{22} \pm \sqrt{-14}}{2}$\\ \hline
    $15$ & $\frac{\sqrt{7} \pm \sqrt{-1}}{2}$ & $ \frac{\sqrt{10} \pm \sqrt{-2}}{2}$ & \\ \hline
    $18$ & $\frac{1 \pm \sqrt{-7}}{2}$ & $ \frac{\sqrt{3} \pm \sqrt{-5}}{2}$ & $ \frac{\sqrt{5} \pm \sqrt{-7}}{2}$\\ \hline
    $24$ & $\frac{\sqrt{3} \pm \sqrt{-5}}{2}$ & $ \frac{\sqrt{5} \pm \sqrt{-3}}{2}$ &\\ \hline
    $26$ & $\frac{\sqrt{7} \pm \sqrt{-1}}{2}$ & &\\ \hline
    $30$ & $\frac{1 \pm \sqrt{-7}}{2}$ & $ \frac{\sqrt{2} \pm \sqrt{-10}}{2}$ &\\
    \hline
    \end{tabular}
\end{table}

\section{Even Exponents}
\label{evensec}

In this section, we prove the following special
case of Theorem~\ref{mainthm}.

\begin{thm}
\label{eventhm}
The equation
\[NX^2 + 2^{2L}3^{2M} = Y^N,\]
has no solution with $N$, $X$, $Y$, $L$, $M\in\N$, $N > 1$,
and $\gcd(NX,Y) = 1$.
\end{thm}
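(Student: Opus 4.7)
The plan is to factor the equation in $K = \Q(\sqrt{-N})$ to obtain an $N$-th-power relation, and then invoke the Bilu--Hanrot--Voutier primitive-divisor theory via Lemma~\ref{VoThm1}.

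By Lemma~\ref{sqfree}, I may assume $N$ is square-free; since $L, M \geq 1$, we have $\gcd(N,6)=1$ and $N \geq 5$ is odd. Setting $A = 2^L 3^M$, the equation reads $NX^2 + A^2 = Y^N$, and $\gcd(NX, Y) = 1$ forces $\gcd(Y, 6A) = 1$. In $\ri_K$, factor
\[
(A + X\sqrt{-N})(A - X\sqrt{-N}) = Y^N,
\]
and observe that the two factors are coprime as ideals, since any common prime ideal would divide both $(2A)$ and $(Y^N)$, contradicting $\gcd(2A, Y) = 1$. Hence $(A + X\sqrt{-N}) = \fr{a}^N$ with $\fr{a}\bar{\fr{a}} = (Y)$. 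A class-group argument---exploiting that $N$ is odd and square-free---yields that $\fr{a}$ is principal, giving $A + X\sqrt{-N} = \pi^N$ for some $\pi \in \ri_K$ (the units $\pm 1$ being absorbable into $\pi$ since $N$ is odd). A $2$-adic analysis then shows $\pi = a + b\sqrt{-N}$ with $a, b \in \Z$.

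Expanding $(a + b\sqrt{-N})^N$ via the binomial theorem and equating real and $\sqrt{-N}$-parts gives $A = a \cdot P(a, b)$ for an explicit integer polynomial $P$ and $X = b\, L_N(\pi, \bar\pi)$. The first forces $a \mid 2^L 3^M$, so $a = \pm 2^{L'} 3^{M'}$. The pair $(\pi, \bar\pi)$ is then a Lehmer pair: $\pi\bar\pi = Y$ and $(\pi + \bar\pi)^2 = 4a^2$ are nonzero integers; $\gcd(Y, 4a^2) = 1$ follows from $\gcd(Y, 6A) = 1$ and $a \mid A$; and $\pi/\bar\pi$ is not a root of unity since the only roots of unity in $K$ are $\pm 1$ (for $N \geq 5$) and neither $A$ nor $X$ vanishes.

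I next argue that $L_N(\pi, \bar\pi) = X/b$ admits a primitive divisor in every case. For $N > 30$ this is immediate from Bilu--Hanrot--Voutier. For $7 \leq N \leq 30$ with $N \neq 8, 10, 12$, Lemma~\ref{VoThm1} forces $i^k\pi$ into Table~\ref{Votable} in the absence of a primitive divisor; but the only values of $N$ coprime to $6$ appearing there are $N = 7$ and $N = 13$, and the corresponding table entries lie outside $\Z[\sqrt{-N}]$, contradicting $\pi \in \Z[\sqrt{-N}]$. A primitive divisor $p$ is thus an odd prime coprime to $Nab$, divides $X$, and satisfies $p \equiv \pm 1 \pmod N$ via the rank-of-apparition bound. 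The final step is to derive a contradiction from this $p$ together with the rigid constraint $A = a\,P(a,b) = 2^L 3^M$, by carefully extracting divisibility information from the binomial expansion of $\pi^N$. The case $N = 5$, outside the range of Lemma~\ref{VoThm1}, will require a separate direct argument.

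The main obstacles I expect are (i) justifying the class-group step for arbitrary square-free $N$ coprime to $6$, since $h_K$ is unbounded as $N$ varies, so one must exploit the specific structure $\fr{a}\bar{\fr{a}} = (Y)$ together with $N$ odd; and (ii) completing the primitive-divisor contradiction, which requires leveraging the very restrictive form $A = 2^L 3^M$ to preclude the existence of an odd prime $p \equiv \pm 1 \pmod N$ dividing $X/b$ but none of $2 \cdot 3 \cdot N \cdot a \cdot b$ or the smaller Lehmer terms $L_s$.
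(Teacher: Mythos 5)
Your overall strategy---factor in $\Q(\sqrt{-N})$, extract a (near) $N$-th power, and feed the resulting pair into Bilu--Hanrot--Voutier and Lemma~\ref{VoThm1}---is the right family of ideas, but the proposal has two genuine gaps, and the paper's proof is structured precisely to avoid both. First, the principality step: from $(A+X\sqrt{-N})=\fr{a}^N$ you cannot conclude $\fr{a}$ is principal unless $\gcd(N,h_K)=1$, and for composite square-free $N$ nothing rules out a common factor between $N$ and the class number; you flag this as ``obstacle (i)'' but offer no resolution, and there is no easy one. The paper sidesteps it entirely by invoking the Heuberger--Le lemma, which delivers $2^{\ell}3^{m}+x\sqrt{-n}=\lambda_1(X_1+\lambda_2Y_1\sqrt{-n})^{t}$ where $n=Z_1t$ and $Z_1\mid h(-4n)$; the exponent is only $t$, not $n$, and the degenerate case $t=1$ (where your ``$\fr{a}$ principal'' would be needed) is killed at the very end by the Wang--Wang inequality $n>h(-4n)$ for square-free $n>1$. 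Without Heuberger--Le or an equivalent device, your argument does not get off the ground for general $N$.

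Second, the endgame is aimed at the wrong component. A primitive divisor $p$ of $L_N(\pi,\bar\pi)=X/b$ tells you only that some prime $p\equiv\pm1\pmod N$ divides $X$, and since $X$ is otherwise unconstrained this yields no contradiction; your ``final step'' is not a proof sketch but the entire difficulty. The usable rigidity sits in the \emph{real} part: one shows $6\mid X_1$ (from $X_1^2+nY_1^2=y^{Z_1}$ and the mod $6$ congruences), whence the cofactor $\cS$ in $2^{\ell}3^{m}=X_1|\cS|$ satisfies $\cS\equiv\pm1\pmod 6$ and therefore equals $\pm1$. Taking the pair $(\gamma,\delta)=(X_1+Y_1\sqrt{-n},\,-X_1+Y_1\sqrt{-n})$ (sign flipped on the real part, so that $L_t(\gamma,\delta)=\cS=\pm1$) makes the pair $t$-\emph{defective}---the logic runs ``no primitive divisor, hence $t\leq 30$,'' the reverse of your ``primitive divisor exists, hence contradiction.'' Combined with $\gcd(t,6)=1$ and Table~\ref{Votable} this leaves $t\in\{1,5\}$; $t=5$ dies modulo $8$ (the case you defer as ``a separate direct argument'' for $N=5$), and $t=1$ dies by $n>h(-4n)$ as above. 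You would need to reorganize your proof along these lines for it to close.
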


\begin{proof}
Suppose that $(N,X,Y,L,M) = (n,x,y,\ell,m)$ is a solution to
$NX^2 + 2^{2L}3^{2M} = Y^N$, with
$n$, $x$, $y$, $\ell$, $m\in\N$, $n > 1$, 
and $\gcd(nx,y) = 1$.  
It follows immediately that
$y > 1$ and $nx^2 \equiv y^n \pmod 6$.  Since $\gcd(nx,y) = 1$, we have
\[n\equiv y \equiv \pm 1 \pmod 6\hspace{.15in}\mbox{and}\hspace{.15in}
x\equiv \pm 1 \pmod 6.\]
By Lemma~\ref{sqfree}, we may assume that $n$ is square-free.

We now apply the following lemma, proved by Heuberger and Le~\cite{HL99}
and adapted to this form by Wang and Wang~\cite{WW11}.
\begin{lemma}[Heuberger \& Le]
Let $d\in\Z$ be square-free such that $d > 1$, and 
let $k\in \Z$ be odd such that
$k > 1$ and $\gcd(d,k) = 1$. Let 
$h(-4d)$ denote the number of classes of primitive
binary quadratic forms of discriminant $-4d$.
If the equation
\[ X^2 + dY^2 = k^Z,\hspace{.2in} X,Y,Z\in\Z,\hspace{.2in}\gcd(X,Y) = 1,
\hspace{.2in} Z > 0\]
has a solution, $(X,Y,Z)$, then 
there exist $X_1$, $Y_1$, $Z_1$, $t\in\N$ and
$\lambda_1$, $\lambda_2\in\{+1,-1\}$, such that
\[X_1^2+dY_1^2=k^{Z_1}, \hspace{.25in} \gcd(X_1,Y_1) = 1,\]
\[Z = Z_1t,\hspace{.25in}
Z_1\mid h(-4d),\hspace{.25in}\text{and}\]
\[X + Y\sqrt{-d} = \lambda_1(X_1+\lambda_2Y_1\sqrt{-d})^t.\]
\end{lemma}

By the lemma, since
$\left(2^{\ell}3^{m}\right)^2+nx^2=y^n$, with $n > 1$ square-free,
$y > 1$ odd, and $\gcd(nx,y)=1$,
there exist 
$X_1$, $Y_1$, $Z_1$, $t\in\N$ and
$\lambda_1$, $\lambda_2\in\{+1,-1\}$, such that
\begin{equation}
\label{X1Y1Z1}
X_1^2+nY_1^2=y^{Z_1}, \hspace{.25in} \gcd(X_1,Y_1) = 1,
\end{equation}
\begin{equation}
\label{Z1}
n = Z_1t,\hspace{.25in}
Z_1\mid h(-4n),\hspace{.25in}\text{and}
\end{equation}
\begin{equation}
\label{2l2mlem}
2^\ell 3^{m}+x\sqrt{-n}=\lambda_1(X_1+\lambda_2Y_1\sqrt{-n})^t.
\end{equation}
Note that, since $\gcd(n,6) = 1$, $\gcd(t,6) = \gcd(Z_1,6) = 1$.
Thus $t$ is odd and $y^{Z_1} \equiv y \equiv n \equiv \pm 1 \pmod 6$.
For ease in notation, let $t = 2t_1 + 1$.

Expanding~(\ref{2l2mlem})
and taking the absolute value of the real and imaginary parts 
of each side yields
\begin{equation}
\label{realeven}
2^\ell 3^{m} =
\left|\sum_{j=0}^{t_1} \binom{t}{2j} 
X_1^{t-2j}(-nY_1^2)^j\right| =
X_1\left|\sum_{j=0}^{t_1}\binom{t}{2j}X_1^{t-2j-1}(-nY_1^2)^j
\right|,
\end{equation}
and
\begin{equation}
\label{imageven}
x=Y_1\left|\sum_{j=0}^{t_1}\binom{t}{2j+1}
X_1^{ t-2j-1}(-nY_1^2)^{j}\right|. 
\end{equation}
By equation~(\ref{realeven}), $2$ and $3$ are the only possible prime
divisors of $X_1$. 
By equation~(\ref{imageven}) and $\gcd(x,6) = 1$, $\gcd(6, Y_1)=1$.
Thus $Y_1\equiv \pm 1\pmod 6$.

By equation~(\ref{X1Y1Z1}), 
$X_1^2 + n \equiv n \pmod 6$,
and so $X_1 \equiv 0 \pmod 6$.

Rewriting equation~(\ref{realeven}) as $2^\ell3^{m} = X_1|\cS|$, with
\[\cS = \sum_{j=0}^{t_1}\binom{t}{2j}X_1^{t-2j-1}(-nY_1^2)^j,\]
we have
\[\cS \equiv \binom{t}{t-1} (-nY_1^2)^{t_1} \equiv \pm1 \pmod 6.\]
But then, $\gcd(\cS,6) = 1$ and so 
$X_1 = 2^\ell3^{m}$ and $|\cS| = 1$.

Let $\gamma = X_1 + Y_1\sqrt{-n} $
and let $\delta = -X_1 + Y_1\sqrt{-n}$.

\begin{lemma} The pair
$(\gamma,\delta)$ is a $t$-defective Lehmer pair.
\end{lemma}

\begin{proof}
An easy calculation shows that
$\gamma\delta = -X_1^2 - nY_1^2 = -y^{Z_1}$ and
$(\gamma + \delta)^2 = -4nY_1^2$, each of which is nonzero.
Suppose that $p$ is prime such 
that $p\mid \gcd(\gamma\delta,(\gamma+\delta)^2)$.
Then, since $\gcd(n,y)=1$ and $y$ is odd, $p\mid Y_1$.  Additionally,
$p\mid (y^{Z_1}-nY_1^2)$ and so $p\mid X_1$.  But $\gcd(X_1,Y_1)=1$, and thus
$\gcd(\gamma\delta,(\gamma+\delta)^2) = 1$.  Note that
since $n > 1$, $\gcd(n,6) = 1$, and $n$ is square-free,
the only roots of unity in $\Q(\sqrt{-n})$ are $\pm 1$.  Thus,
$\frac{\gamma}{\delta}$ is not a root of unity. 
Therefore, $(\gamma,\delta)$ is a Lehmer pair.

Finally, by equations~(\ref{2l2mlem}) and~(\ref{realeven}),
\[|L_t(\gamma,\delta)| = 
\left|\frac{\gamma^t-\delta^t}{\gamma-\delta}\right| =
\left|\frac{2\Re(\gamma^t)}{2\Re(\gamma)}\right| = 
\frac{X_1|\cS|}{X_1} = 1.\]
Thus, $(\gamma,\delta)$ is a $t$-defective Lehmer pair.
\end{proof}

By the work of Bilu, Hanrot, and Voutier~\cite[Theorem 1.4]{BHV01}, since
there exists a $t$-defective Lehmer pair, we have that $t \leq 30$.
Then, using Lemma~\ref{VoThm1} with the fact that $\gcd(t,6) = 1$,
it follows that $t\in \{1,5\}$.

If $t = 5$, then
\[\cS = \sum_{j=0}^2\binom{5}{2j}(2^\ell 3^{m})^{5-2j-1}(-nY_1^2)^j =
2^{4\ell}3^{4m}-10\cdot2^{2\ell}3^{2m}nY_1^2+5 n^2Y_1^4.\]
Since $\cS = \pm 1$ and $n$ and $Y_1$ are both odd,
$\pm 1 = \cS \equiv 5 n^2Y_1^4 \equiv 5 \pmod 8$,
which is impossible.

Thus, $t = 1$.  So, by equation~(\ref{Z1}), $Z_1 = n$ and,
hence, $n\mid h(-4n)$.
But, since $n$ is greater than 1 and square-free, by~\cite[Lemma 3]{WW11},
$n > h(-4n)$, a contradiction.
\end{proof}

\section{Odd Exponents}
\label{oddsec}

In this section, we prove the remaining cases of Theorem~\ref{mainthm},
as described in the following theorem.

\begin{thm}
\label{oddthm}
The equation
\[NX^2 + 2^{L}3^{M} = Y^N,\]
has no solution with $N$, $X$, $Y$, $L$, $M\in\N$, $N > 1$,
$\gcd(NX,Y) = 1$, and $L$ and $M$ not both even.
\end{thm}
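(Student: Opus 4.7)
The plan is to extend the strategy of Theorem~\ref{eventhm} to the three remaining parity cases for $(L,M)$: $L$ odd with $M$ even, $L$ even with $M$ odd, and both odd. In each case, write $2^L 3^M = c^2 D$ where $c\in\N$ and $D$ is the square-free part of $2^L 3^M$, so $D\in\{2,3,6\}$. After reducing to square-free $n$ via Lemma~\ref{sqfree}, the equation becomes $nx^2+c^2D=y^n$. Because $L,M\geq 1$ forces $6\mid 2^L 3^M$, the same congruence arguments used at the start of the even case go through to give $\gcd(n,6)=\gcd(x,6)=1$, $y$ odd, and $n\equiv y\equiv\pm 1\pmod 6$.

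The key new feature is that $c^2 D$ is no longer a perfect square, so the factorization must be carried out in the ring $\ri$ of integers of $\Q(\sqrt{-nD})$ rather than in $\Z[\sqrt{-n}]$. Multiplying the equation by $D$ yields
\[(cD+x\sqrt{-nD})(cD-x\sqrt{-nD})=Dy^n.\]
Since $\gcd(n,6)=1$ forces $\gcd(n,D)=1$, the integer $nD$ is square-free and the prime $D$ ramifies in $\ri$ as $(D)=\fr{p}_D^2$, with $[\fr{p}_D]$ of order $2$ in the class group. An ideal coprimality analysis of the two conjugate factors (using $\gcd(nx,y)=1$) together with a class-group argument that tracks $[\fr{p}_D]$ should produce an analog of the Heuberger--Le relation of~\cite{HL99}: there exist $X_1,Y_1,Z_1,t\in\N$ and signs $\lambda_1,\lambda_2\in\{\pm 1\}$ with $n=Z_1 t$, a divisibility of $Z_1$ in terms of $h(-4nD)$, a companion identity $X_1^2+nDY_1^2=Dy^{Z_1}$, and
\[cD+x\sqrt{-nD}=\lambda_1(X_1+\lambda_2 Y_1\sqrt{-nD})^t.\]
Expanding the right side, comparing real and imaginary parts, and invoking $\gcd(x,6)=1$ should force $|X_1|=cD$ and $|\cS|=1$ for the odd-index binomial sum $\cS$, exactly as in the even case.

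The Lehmer-pair step then mirrors the even case essentially verbatim: the pair $\gamma=X_1+Y_1\sqrt{-nD}$, $\delta=-X_1+Y_1\sqrt{-nD}$ is a Lehmer pair (using $nD>3$ to exclude nontrivial roots of unity in $\Q(\sqrt{-nD})$) that is $t$-defective via $|L_t(\gamma,\delta)|=1$. The theorem of Bilu, Hanrot, and Voutier~\cite{BHV01} gives $t\leq 30$, and combined with $\gcd(t,6)=1$ and the observation that no entry of Table~\ref{Votable} is of the form $i^k\gamma$ with $\gamma=X_1+Y_1\sqrt{-nD}$, $X_1,Y_1\in\Z$, and $nD\geq 10$ square-free (each candidate table element has half-integer or mixed-field coordinates incompatible with this), one is restricted to $t\in\{1,5\}$. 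A mod-$8$ computation on $\cS$---valid in each case $D\in\{2,3,6\}$ because $X_1=cD$ turns out to be even---eliminates $t=5$, while $t=1$ reduces to $Z_1=n$, which contradicts a suitable extension of~\cite[Lemma~3]{WW11}.

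The principal obstacle is the correct handling of the ramified prime $\fr{p}_D$ and its order-$2$ class in deriving the Heuberger--Le analog: the extra factor of $D$ on the right-hand side and the non-principality of $\fr{p}_D$ make the ideal bookkeeping noticeably more delicate than in the even case, and each value $D\in\{2,3,6\}$ may demand its own careful verification. A secondary challenge is establishing the class-number inequality (extending~\cite[Lemma~3]{WW11} from $h(-4n)$ to $h(-4nD)$) needed to kill $t=1$ uniformly in $D\in\{2,3,6\}$.
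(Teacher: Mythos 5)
Your plan diverges from the paper's at the crucial step, and as written it cannot work. The proposed Heuberger--Le analog is inconsistent on norms: the left side $cD+x\sqrt{-nD}$ has norm $c^2D^2+nDx^2 = D(2^L3^M+nx^2)=Dy^n$, while the right side $\lambda_1(X_1+\lambda_2Y_1\sqrt{-nD})^t$, given your companion identity $X_1^2+nDY_1^2=Dy^{Z_1}$ and $n=Z_1t$, has norm $(Dy^{Z_1})^t=D^ty^n$. These agree only when $t=1$, so no relation of this shape can hold for $t>1$. The underlying obstruction is exactly the ramified prime you flag: $\fr{p}_D$ divides the ideal $(cD+x\sqrt{-nD})$ to an odd power, which cannot be absorbed into a $t$-th power of an ideal of $\Q(\sqrt{-nD})$ for odd $t>1$. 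Even granting some corrected relation, your pair $\gamma=X_1+Y_1\sqrt{-nD}$, $\delta=-X_1+Y_1\sqrt{-nD}$ is not a Lehmer pair, since $\gamma\delta=-(X_1^2+nDY_1^2)=-Dy^{Z_1}$ and $(\gamma+\delta)^2=-4nDY_1^2$ share the factor $D>1$, violating the coprimality condition. So the step you describe as ``essentially verbatim'' is precisely where the argument breaks; the part you call the principal obstacle is not merely delicate bookkeeping but a structural failure of the single-field approach.

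The paper resolves this by leaving $\Q(\sqrt{-nD})$: it sets $a=2^k3^{k'}\sqrt{w}+x\sqrt{-n}$ in the biquadratic field $E=\Q(\sqrt{w},\sqrt{-n})$ (your $D$ is its $w$), notes $a^2\in\ri_F$ for $F=\Q(\sqrt{-wn})$, and from $a^2\ri_F=I^{2n}$ obtains $a^2=\pm\alpha^t$ with $st=2n$ and $s$ the order of $[I]$ --- the factor of $2$ coming from squaring is what absorbs the ramification. It then takes a square root $\gamma$ of $\alpha$ back in $\ri_E$, writes $\gamma=A_1\sqrt{w}+B_1\sqrt{-n}$ (with possible half-integer coordinates when $w=3$ and $n\equiv 1\pmod 4$, a case needing separate care), and shows $(\gamma,\bar\gamma)$ is a $2t$-defective Lehmer pair; here $\gamma\bar\gamma=A_1^2w+B_1^2n$ is coprime to $6$, which is what rescues the Lehmer condition. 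It also does not use Heuberger--Le or an extension of the Wang--Wang class number lemma in this case: $t=1$ is eliminated by the analytic class number formula together with a class number table, and $t=5$ by congruences modulo $8$ and $32$ on $A_1,B_1$. To repair your proposal you would essentially have to reconstruct this passage to the biquadratic field, so the gap is not one of missing details but of a missing idea.
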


We begin with a basic computational lemma.

\begin{lemma}
\label{sums}
Let $t_1\in \Z$ and let $t = 2t_1 + 1$.  Then
\[\sum_{j=0}^{t_1}\binom{t}{2j+1} = 2^{t-1} 
\hspace{.3in}\mbox{and}\hspace{.3in}
\sum_{j=0}^{t_1}\binom{t}{2j+1}(-1)^j = \pm 2^{t_1}.\] 
\end{lemma}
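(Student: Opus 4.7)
The plan is to derive both identities from the binomial theorem by choosing appropriate values of $z$ in the expansion of $(1+z)^t$.

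For the first identity, I would compute $(1+1)^t - (1-1)^t = 2^t - 0 = 2^t$. Expanding each side via the binomial theorem, the even-indexed terms cancel while the odd-indexed terms double, yielding
\[
2\sum_{j=0}^{t_1}\binom{t}{2j+1} = 2^t,
\]
and dividing by $2$ gives the desired equality.

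For the second identity, I would use the complex number $i$ with $i^2 = -1$. Expanding $(1+i)^t$ by the binomial theorem and grouping terms by parity of the index gives
\[
(1+i)^t = \sum_{j=0}^{t_1}\binom{t}{2j}(-1)^j + i\sum_{j=0}^{t_1}\binom{t}{2j+1}(-1)^j,
\]
so the target sum is exactly $\Im\bigl((1+i)^t\bigr)$. Using the polar representation $1+i = \sqrt{2}\,e^{i\pi/4}$, one has $(1+i)^t = 2^{t/2}\bigl(\cos(t\pi/4) + i\sin(t\pi/4)\bigr)$. Since $t = 2t_1+1$ is odd, $\sin(t\pi/4) = \pm\sqrt{2}/2$, hence
\[
\Im\bigl((1+i)^t\bigr) = \pm 2^{t/2}\cdot 2^{-1/2} = \pm 2^{(t-1)/2} = \pm 2^{t_1},
\]
as required.

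Neither step presents a serious obstacle; both identities are standard consequences of the binomial theorem. The only mild bookkeeping issue is in the second identity, where one must track the sign of $\sin((2t_1+1)\pi/4)$; this oscillates with period $4$ in $t_1$, which is precisely why the statement of the lemma records only the ambiguous sign $\pm$ rather than a specific value.
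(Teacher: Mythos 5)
Your proof is correct and follows essentially the same route as the paper: both identities are extracted from the binomial expansions of $(1\pm 1)^t$ and $(1+i)^t$, with the second sum identified as the imaginary part of $(1+i)^t$. The only cosmetic difference is that the paper evaluates this via the difference $(1+i)^t-(1-i)^t$ while you use the polar form $\sqrt{2}\,e^{i\pi/4}$; the sign analysis is equivalent.
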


\begin{proof}
First, let $f(t) = \sum_{j=0}^{t_1}\binom{t}{2j+1}$
and let $g(t) = \sum_{j=0}^{t_1}\binom{t}{2j}$.
Then $f(t) + g(t) = (1 + 1)^t$ and $-f(t) + g(t) = (1 - 1)^t$.
Solving these for $f(t)$ yields the first result.

Next, let $f_1(t) = \sum_{j=0}^{t_1}\binom{t}{2j+1}(-1)^j
= -i\sum_{j=0}^{t_1}\binom{t}{2j+1}(i)^{2j+1}$ and
$g_1(t) = -i\sum_{j=0}^{t_1}\binom{t}{2j}(i)^{2j}$.
Then $f_1(t) + g_1(t) = -i(1 + i)^t$ and $-f_1(t) + g_1(t) = -i(1 - i)^t$.
Solving for $f_1(t)$ completes the proof.
\end{proof}

\begin{proof}[Proof of Theorem~\ref{oddthm}]
Suppose that $(N,X,Y,L,M) = (n,x,y,\ell,m)$ is a solution to
$NX^2 + 2^{L}3^{M} = Y^N$, with
$n$, $x$, $y$, $\ell$, $m\in\N$, $n > 1$, $\gcd(nx,y) = 1$, and either $\ell$ or $m$ odd.

Since $\ell$ and $m$ are nonzero, 
$nx^2 \equiv y^n \pmod 6$.  This with
$\gcd(nx,y) = 1$ yields
\[n\equiv y \equiv \pm 1 \pmod 6\hspace{.15in}\mbox{and}\hspace{.15in}
x\equiv \pm 1 \pmod 6.\]
Since $n > 1$, this implies that, in fact, $n \geq 5$.

Let $\ell = 2k + e$ and $m = 2k^\prime + e^\prime$  with
$k$, $k^\prime \geq 0$ and $e$, $e^\prime\in\{0,1\}$.
Set $w = 2^e3^{e^\prime} \in \{1,2,3,6\}$.
By assumption, $\ell$ and $m$ cannot both be even.
Hence, $w \in \{2,3,6\}$.

Set $a = 2^k3^{k^\prime}\sqrt{w} + x\sqrt{-n}$ and
$b = 2^k3^{k^\prime}\sqrt{w} - x\sqrt{-n}$.
Then $ab = y^n$.  Letting $E = \Q(\sqrt{w},\sqrt{-n})$ and 
$F = \Q(\sqrt{-wn})$, we have 
$a$, $b\in \ri_E$ and 
$a^2$, $b^2\in \ri_F$. 

Suppose that there exists a prime ideal $\fr{p} \subseteq \ri_E$
such that $\fr{p}\mid a\ri_E$ and $\fr{p}\mid b\ri_E$.  Then, since
$\fr{p}\mid ab\ri_E$, $\fr{p}\mid y\ri_E$ and, since $\fr{p}\mid (a+b)\ri_E$
and $w\ri_E\mid 6\ri_E$,
$\fr{p}\mid 6\ri_E$.  But this is not possible, 
since $y$ is relatively prime to $6$ in $\Z$.  
Hence $a\ri_E$ and $b\ri_E$
are relatively prime in $\ri_E$.  It follows easily that
$a^2\ri_F$ and $b^2\ri_F$ are relatively prime in $\ri_F$.

Now, $(a^2\ri_F)(b^2\ri_F) = y^{2n}\ri_F = (y\ri_F)^{2n}$.  By
the unique factorization of ideals in $\ri_F$, there exists an
ideal $I\subseteq \ri_F$ such that $a^2\ri_F = I^{2n}$.  Let $s$ be
the order of the ideal class of $I$ in the class group of $\ri_F$.  
Then there exists 
$\alpha\in \ri_F$ such that $I^s = \alpha\ri_F$.
Since $a^2\ri_F$ is 
principal, we have $s\mid 2n$, and so $2n = st$ for some $t\in \N$.
Further, $a^2\ri_F = I^{2n} = (I^s)^t = \alpha^t\ri_F$,
and so there exists a unit $\varepsilon\in\ri_F$ 
such that $a^2 = \varepsilon\alpha^t$. 
Since $F = \Q(\sqrt{-wn})$ with $wn$ square-free and $n \geq 5$, 
$\varepsilon = \pm 1$.

Suppose $t$ is even, so $t = 2t_0$ for some $t_0\in \N$.  Then 
\[\left(\frac{a}{\alpha^{t_0}}\right)^2 = \varepsilon = \pm 1.\]
But, as is easily verified,
$E$ does not contain a square root of $-1$.  So $\varepsilon = 1$ and
$a = \pm\alpha^{t_0} \in F$, contradicting the 
definition of $a$. 
Thus $t$ is odd and so $s$ is even.  
Further, since $t\mid 2n$, $\gcd(t,6) = 1$.

Replacing $\alpha$ with 
$-\alpha$, if necessary, we may assume, without loss of
generality, that $\varepsilon = 1$.
Thus $a^2 = \alpha^t$.  

Suppose that $t = 1$.  Then $s = 2n$ and so
$2n\mid h_F$, the class number of $\ri_F$.
In particular, $2n \leq h_F$.  Let $d = disc(\ri_F)$.
Then $d = -wn$ or $d = - 4wn$.
By the class number formula and a basic bound on $L(1,\chi_d)$~\cite{Nark},
we have
\[h_F = \frac{\sqrt{|d|}}{\pi} L(1,\chi_d)
\leq \frac{\sqrt{|d|}}{\pi} \left(2 + \log{|d|}\vphantom{\sqrt{d}}\right) =
\frac{2\sqrt{|d|}}{\pi} \left(1 + \log{\sqrt{|d|}}\right).\]
Thus, since $|d| \leq 4wn \leq 24n$,
\[2n \leq h_F \leq \frac{2\sqrt{|d|}}{\pi} 
\left(1 + \log{\sqrt{|d|}}\right)
\leq \frac{2\sqrt{24n}}{\pi} \left(1 + \log{\sqrt{24n}}\right)\]
and so
\[\frac{\sqrt{24}}{\pi\sqrt{n}} \left(1 + \log{\sqrt{24n}}\right) \geq 1.\]
Since
$\frac{\sqrt{24}}{\pi\sqrt{51}} \left(1 + \log{\sqrt{24\cdot 51}}\right)
< 1$ and
$\frac{\sqrt{24}}{\pi \sqrt{n}} \left(1 + \log{\sqrt{24n}}\right)$ is
a decreasing function of $n$, for $n \geq 1$,
we have a contradiction for $n > 50$.

For $n \leq 50$ or, equivalently, $wn \leq 300$,
we consult a class number table (for example~\cite[Table 4]{BorSha})
to find that $h_F \leq 22$.  Since $2n \leq h_F$, we have
$n\leq 11$ and so $wn \leq 66$.  Again consulting the table,
we have $h_F \leq 8$ and so $n \leq 4$, a contradiction.

Thus, $t \neq 1$.

Since $t$ is odd, there exists 
$t_1\in \N$, such that $t = 2t_1 + 1$.
Define $\gamma = \frac{a}{\alpha^{t_1}} \in E$.  
Note that
\[\gamma^2 = \frac{a^2}{\alpha^{2t_1}} 
= \frac{\alpha^t}{\alpha^{2t_1}} = \alpha,\]
and therefore, $\gamma\in \ri_E$.

Let $A$, $B\in \Q$ such that 
\[\alpha = A + B\sqrt{-wn}\] 
and note that since $\alpha^t = a^2$, $A$, $B\neq 0$.
Let $A_1$, $B_1$, $C_1$, $D_1\in \Q$ such that 
$\gamma = A_1\sqrt{w} + B_1\sqrt{-n} + C_1\sqrt{-wn} + D_1$.
A simple calculation, using $\gamma^2 = \alpha$, yields
that either $A_1 = B_1 = 0$ or $C_1 = D_1 = 0$.  If the former
holds, then $\gamma \in \ri_F$ and 
$I^{s/2} = \gamma\ri_F$, contrary to the definition of $s$.
Thus
\[\gamma = A_1\sqrt{w} + B_1\sqrt{-n}.\]
Expanding $\gamma^2 = \alpha$ and equating real and imaginary parts
yields 
\begin{equation}
\label{g2=al}
A = A_1^2 w - B_1^2 n \hspace{.4in} \mbox{and} \hspace{.4in}  B = 2 A_1 B_1.
\end{equation}
Since $B \neq 0$, we have $A_1$, $B_1 \neq 0$.

Now, unless $w = 3$ and $n \equiv 1 \pmod 4$, $\ri_F = \Z[\sqrt{-wn}]$.
So $A$, $B \in \Z$.  Further, considering the possible integral
bases for $E$, in this case, $A_1 \in \Z$ and $2B_1 \in \Z$.
But, by equation~(\ref{g2=al}), $B_1^2n = A_1^2 w - A\in \Z$ 
and so $B_1 \in \Z$.

If we do have $w = 3$ and $n \equiv 1 \pmod 4$, then
$\ri_F = \Z[\frac{1 + \sqrt{-3n}}{2}]$ and
$\ri_E = \Z[\sqrt{3},\frac{\sqrt{3} + \sqrt{-n}}{2}]$.
So we have $2A$, $2B$, $2A_1$, $2B_1 \in \Z$.
Further, equation~(\ref{g2=al}) implies that
$A$, $B \in \Z$ if and only if $A_1$, $B_1 \in \Z$.  

Expanding $2^t a^2 = (2 \alpha)^t$, equating real and imaginary parts, yields 
\begin{equation}
\label{2treal}
2^{\ell+t} 3^m - 2^t n x^2 = 
(2A)\sum_{j=0}^{t_1}\binom{t}{2j}(2A)^{t-2j-1}(2B)^{2j}(-wn)^{j}
\end{equation}
and
\begin{equation}
\label{2timag}
2^{k+t+1}3^{k^\prime}x =
(2B)\sum_{j=0}^{t_1}\binom{t}{2j+1}(2A)^{t-2j-1}(2B)^{2j}(-wn)^{j}.
\end{equation}
By equation~(\ref{2treal}), $3\nmid 2A$.  

Suppose that $3\nmid 2Bw$.  From the definition of $w$, $3 \nmid w$ implies
that $k^\prime \neq 0$.  So, reducing 
equation~(\ref{2timag}) modulo $3$ yields
\[0 \equiv \sum_{j=0}^{t_1}\binom{t}{2j+1}(\pm 1)^j \pmod 3,\]
which is impossible, by Lemma~\ref{sums}. 
Thus $3 \mid 2Bw$.

Let $\delta = \overline{\gamma} = A_1\sqrt{w} - B_1\sqrt{-n}$.
Then $\gamma\delta = A_1^2w + B_1^2 n \in \Q \cap \ri_E = \Z$.
Since $(\gamma\delta)^{2t} = (ab)^2 = y^{2n}$ and $2\nmid y$,
we have $2\nmid \gamma\delta$.  

Recall that if $A_1$, $B_1 \notin \Z$, then $w = 3$, $n\equiv 1 \pmod 4$,
and $2A_1 \equiv 2B_1 \equiv 1 \pmod 2$.
Thus $4\gamma\delta = (2A_1)^2w + (2B_1)^2n \equiv 3 + n \pmod 8$.
Since $2\nmid \gamma\delta$ implies that $8\nmid (2\gamma)(2\delta)$,
we have $n \not \equiv 5 \pmod 8$.  Thus, if $A_1$, $B_1 \notin \Z$,
$n \equiv 1 \pmod 8$.

Now,
\[\gamma^t = \left(\frac{a}{\alpha^{t_1}}\right)^t
= \frac{a^{2t_1 + 1}}{\left(\alpha^t\right)^{t_1}}
= \frac{a^{2t_1 + 1}}{a^{2t_1}} = a.\]
It follows that $\delta^t = b$.  
Further, 
\[\frac{\gamma^t + \delta^t}{\gamma + \delta} =
\sum_{j=0}^{t-1}(-\gamma)^j\delta^{t-j-1} \in \Z,\]
since it is an algebraic integer fixed by every automorphism of $E$. 
Thus, since $\gamma^t + \delta^t = a + b = 2^{k+1}3^{k^\prime}\sqrt{w}$, 
we find that, in $\Z$,
\[\left(\frac{\gamma + \delta}{\sqrt{w}}\right)\left|
\left(\frac{\gamma^t + \delta^t}{\sqrt{w}}\right).\right.\]
Simplifying yields $2A_1 \mid  2^{k+1}3^{k^\prime}$.

\begin{lemma} 
$(\gamma,\delta)$ is a $2t$-defective Lehmer pair.
\end{lemma}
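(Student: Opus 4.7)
The plan is to verify the four defining axioms of a Lehmer pair and then to check the $2t$-defective condition by computing $L_{2t}(\gamma,\delta)$ directly, using the identities $\gamma^t = a$ and $\delta^t = b$ established just above the lemma.

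First, I would confirm the axioms. The quantities $\gamma\delta = A_1^2 w + B_1^2 n$ and $(\gamma+\delta)^2 = 4A_1^2 w$ are nonzero (since $A_1, B_1 \neq 0$) and both lie in $\Q \cap \ri_E = \Z$; in the exceptional case $w = 3$, $n \equiv 1 \pmod{4}$, where $A_1, B_1 \in \tfrac{1}{2}\Z \setminus \Z$, integrality follows from $(2A_1)^2, (2B_1)^2 \in \Z$. For the coprimality $\gcd(\gamma\delta, (\gamma+\delta)^2) = 1$, the ideals $\gamma\ri_E$ and $\delta\ri_E$ are coprime (a prime ideal dividing both would divide $a = \gamma^t$ and $b = \delta^t$, contradicting the coprimality of $a\ri_E$ and $b\ri_E$), so any prime ideal dividing both $\gamma\delta$ and $(\gamma+\delta)^2$ would divide one of $\gamma,\delta$ and then, via the square, the other---a contradiction. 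For the non-root-of-unity condition, the only roots of unity in $E$ are $\pm 1$: $i \notin E$ was already noted, and none of the three quadratic subfields $\Q(\sqrt{w})$, $\Q(\sqrt{-n})$, $\Q(\sqrt{-wn})$ can equal $\Q(\sqrt{-3})$ given $n \geq 5$ coprime to $6$ and $w \in \{2,3,6\}$, so $\omega \notin E$; the biquadratic structure of $E$ rules out larger cyclotomic subfields. Thus $\gamma/\delta$ a root of unity would force $\gamma = \pm\delta$, contradicting $A_1 B_1 \neq 0$.

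Next, I would compute $L_{2t}(\gamma,\delta)$ explicitly. From $a - b = 2x\sqrt{-n}$, $a + b = 2^{k+1}3^{k'}\sqrt{w}$, $\gamma - \delta = 2B_1\sqrt{-n}$, and $\gamma + \delta = 2A_1\sqrt{w}$, together with $\gamma^t = a$, $\delta^t = b$, one obtains
\[L_{2t}(\gamma,\delta) = \frac{\gamma^{2t} - \delta^{2t}}{\gamma^2 - \delta^2} = \frac{a^2 - b^2}{\gamma^2 - \delta^2} = \frac{2^k 3^{k'} x}{A_1 B_1},\]
and, in passing, $L_t(\gamma,\delta) = (a - b)/(\gamma - \delta) = x/B_1$, which is a rational integer because $(\gamma,\delta)$ is a Lehmer pair.

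To conclude, I would verify $2t$-defectiveness by a case split on a prime $p \mid L_{2t}(\gamma,\delta)$. Writing $(\gamma^2 - \delta^2)^2 = -16 A_1^2 B_1^2 wn$: if $p = 2$, then $p \mid 16 \mid (\gamma^2 - \delta^2)^2$; if $p = 3$, then from $3 \mid 2Bw = 4A_1 B_1 w$, shown just above the lemma, we deduce $3 \mid (\gamma^2 - \delta^2)^2$; and if $p \notin \{2,3\}$, then $2A_1 \mid 2^{k+1}3^{k'}$ forces $v_p(A_1) = 0$, so $p \mid x/B_1 = L_t(\gamma,\delta)$, one of the factors in $L_1(\gamma,\delta) \cdots L_{2t-1}(\gamma,\delta)$. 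Hence every prime of $L_{2t}(\gamma,\delta)$ divides $(\gamma^2 - \delta^2)^2 L_1(\gamma,\delta) \cdots L_{2t-1}(\gamma,\delta)$. The main obstacle I anticipate is the bookkeeping in the exceptional case $w = 3$, $n \equiv 1 \pmod{4}$: each integrality, coprimality, and divisibility claim above must be verified using $2A_1$ and $2B_1$ in place of $A_1$ and $B_1$, which introduces clutter but no new ideas.
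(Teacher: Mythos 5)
There is a genuine gap in your $p=2$ case, and it is exactly the point your closing ``bookkeeping'' disclaimer waves away. When $w=3$ and $n\equiv 1\pmod 4$, the coefficients satisfy $A_1,B_1\in\tfrac12\Z\setminus\Z$ with $2A_1,2B_1$ odd, so
\[(\gamma^2-\delta^2)^2=-16A_1^2B_1^2wn=-(2A_1)^2(2B_1)^2\cdot 3n\]
is an \emph{odd} integer: your claim $16\mid(\gamma^2-\delta^2)^2$ is false in this case. The problem is unavoidable, because $2$ really does divide $L_{2t}(\gamma,\delta)=2^{k+1}3^{k'}x/\bigl((2A_1)B_1\bigr)$ there (its $2$-adic valuation is $k+2$), so you must locate the factor of $2$ somewhere else in the product $(\gamma^2-\delta^2)^2L_1(\gamma,\delta)\cdots L_{2t-1}(\gamma,\delta)$. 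The paper does this with a genuinely new step: using $n\equiv 1\pmod 8$ (which was derived earlier from $2\nmid\gamma\delta$), it computes $4L_3(\gamma,\delta)=9(2A_1)^2-(2B_1)^2n\equiv 0\pmod 8$, so $2\mid L_3(\gamma,\delta)$, and $L_3$ is among the listed factors since $t\geq 5$. Without some such argument your proof of defectiveness does not close.

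The rest of your proposal is sound and essentially the paper's route: the same factorization $L_{2t}=\frac{2^{k+1}3^{k'}}{2A_1}L_t$ via $\gamma^t=a$, $\delta^t=b$, the same use of $2A_1\mid 2^{k+1}3^{k'}$ to confine the extra primes to $\{2,3\}$, and the same appeal to $3\mid 2Bw$ for $p=3$. Your Lehmer-pair verification differs in minor ways (you argue coprimality through prime ideals of $\ri_E$ dividing $\gamma$ and $\delta$, where the paper notes $p\mid\gcd$ forces $p\in\{2,3\}$ by $2A_1\mid 2^{k+1}3^{k'}$ and then uses $(\gamma\delta)^{2t}=y^{2n}$ with $\gcd(y,6)=1$; and you place $\gamma/\delta$ in $E$ where the paper uses $\gamma/\delta\in F$), but both routes are valid. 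Only the $p=2$ case needs repair.
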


\begin{proof}
First recall that $\gamma\delta \in \Z$ and, since $2\nmid \gamma\delta$,
$\gamma\delta \neq 0$.  Further, 
$(\gamma+\delta)^2 = 4A_1^2w \in \Z - \{0\}$.
Suppose that $p\in \Z$ is prime 
such that $p\mid \gcd(\gamma\delta,(\gamma+\delta)^2)$.
Then, since $2A_1 \mid  2^{k+1}3^{k^\prime}$, $p = 2$ or $p = 3$.  But
$(\gamma\delta)^{2t} = (ab)^2 = y^{2n}$ and $\gcd(y,6) = 1$.
Hence no such $p$ exists and therefore
$\gcd(\gamma\delta,(\gamma+\delta)^2) = 1$.
Note that
$\frac{\gamma}{\delta} \in F$, in which the
only roots of unity are $\pm 1$.  It follows that
$\frac{\gamma}{\delta}$ is not a root of unity, since
$A_1$, $B_1\neq 0$.
Thus, $(\gamma,\delta)$ is a Lehmer pair.

Now suppose that $p$ is a prime divisor of $L_{2t}(\gamma,\delta)$.
Then, since
\begin{align*}
L_{2t}(\gamma,\delta) &= 
\frac{\gamma^{2t} - \delta^{2t}}{\gamma^2 - \delta^2} 
= \frac{(\gamma^{t} - \delta^{t})(\gamma^{t} + \delta^{t})}
{(\gamma - \delta)(\gamma + \delta)} 
= L_t(\gamma,\delta) \frac{a + b}{\gamma + \delta} \\
&= L_t(\gamma,\delta) \frac{2^{k+1}3^{k^\prime}\sqrt{w}}{2 A_1 \sqrt{w}}
= \frac{2^{k+1}3^{k^\prime}}{2A_1}L_t(\gamma,\delta),
\end{align*}
we have that $p = 2$, $p = 3$, or $p\mid L_t(\gamma,\delta)$.

Also, $(\gamma^2 - \delta^2)^2 = -16A_1^2B_1^2wn = -4B^2wn$.  
Since $3|2Bw$, $3|(\gamma^2 - \delta^2)^2$.  
Further, if $A_1$, $B_1\in \Z$, then $2|(\gamma^2 - \delta^2)^2$.  
If, instead, $A_1$, $B_1\notin \Z$, then $w = 3$ and 
$n \equiv 1 \pmod 8$.  Thus,
\[4 L_3(\gamma,\delta) = 4\frac{\gamma^{3} - \delta^{3}}{\gamma - \delta}
= 9(2A_1)^2 - (2B_1)^2n \equiv 9 - 1 \equiv 0 \pmod 8,\]  
and so $2\mid L_3(\gamma,\delta)$.
Hence, in any case, $p \mid 
(\gamma^2 - \delta^2)^2L_1(\gamma,\delta)\dots L_{2t-1}(\gamma,\delta)$.
Thus $(\gamma,\delta)$ is a $2t$-defective Lehmer pair.
\end{proof}

By Bilu, Hanrot, and Voutier~\cite[Theorem 1.4]{BHV01}, since
there exists a $2t$-defective Lehmer pair, $2t \leq 30$.  
Then, by Lemma~\ref{VoThm1},
the only candidates for $\gamma$ with $2t > 12$ are
of the form $\gamma = i^k (\sqrt{3} \pm \sqrt{-n})/2$  with 
$n \equiv 5 \pmod 8$.  But in each of these cases, $A_1$, $B_1\notin \Z$ which,
as shown above, implies that $n \equiv 1 \pmod 8$.
Thus, $2t \leq 12$.  Finally, since
$t \geq 5$ is odd, $t = 5$.

Expanding $a^2 = \alpha^5$ and equating real and imaginary parts, we find
\begin{equation}
\label{a2real}
2^\ell3^m - nx^2 =
A\sum_{j=0}^{2}\binom{5}{2j}A^{5-2j-1}B^{2j}(-wn)^{j}
\end{equation}
and
\begin{equation}
\label{a2imag}
2^{k+1}3^{k^\prime}x =
B\sum_{j=0}^{2}\binom{5}{2j+1}A^{5-2j-1}B^{2j}(-wn)^{j}.
\end{equation}
Similarly, expanding $a = \gamma^5$ yields
\begin{equation}
\label{areal} 
2^k3^{k^\prime}
= A_1\left(A_1^{4}w^2 - 10A_1^{2}B_1^{2}wn + 5B_1^{4}n^{2} \right)
\end{equation}
and
\begin{equation}
\label{aimag}
x =  B_1\left(5A_1^{4}w^2 - 10A_1^{2}B_1^{2}wn + B_1^{4}n^{2} \right).
\end{equation}

Suppose, first, that $A_1$, $B_1\in \Z$.  Since $B = 2A_1B_1$, $2|B$.
By equation~(\ref{a2real}), $\gcd(A,6) = 1$ and so, by
equation~(\ref{a2imag}), $2^{k+1}\mid B$.
To see that $3^{k^\prime} \mid B$, suppose that $k^\prime > 0$
and $3\nmid B$.  Reducing equation~(\ref{a2imag}), 
\[0 = B(5A^4 - 10A^2B^2wn + B^4w^2n^2) \equiv B(-1 - wn + w^2) \pmod 3.\]  
Thus, $3\nmid w$ and so $w = 2$.
Hence, $0 \equiv Bn \pmod 3$, a contradiction.  Therefore, if $k^\prime > 0$,
$3\mid B$.  Since $3\nmid 5A^4$, equation~(\ref{a2imag}) implies
that $3^{k^\prime} \mid B$.

By equation~(\ref{aimag}), $\gcd(B_1,6) = 1$.  Since $B = 2 A_1B_1$ and
$2^{k+1}3^{k^\prime} \mid B$, we have $2^{k}3^{k^\prime} \mid A_1$.
Hence, by equation~(\ref{areal}), 
\[A_1^{4}w^2 - 10A_1^{2}B_1^{2}wn + 5B_1^{4}n^{2} = \pm 1.\]
If $k > 0$, then $2\mid A_1$, and reducing modulo $8$ yields a
contradiction.  If $k = 0$, then we have $2\mid w$ and $2\nmid A_1$.
Again, reducing modulo $8$ yields a contradiction, since 
$2wn \equiv 4 \pmod 8$.

Now suppose that $A_1$, $B_1\notin \Z$.  Then we have 
$w = 3$, $n\equiv 1 \pmod 8$, and 
$(2A_1)^2 \equiv (2B_1)^2 \equiv 1 \pmod 8$.
Equation~(\ref{aimag}) becomes
\begin{align*}
32x &= (2B_1)\left[5(2A_1)^{4}w^2 - 
10(2A_1)^{2}(2B_1)^{2}wn + (2B_1)^{4}n^{2} \right]\\
&= (2B_1)\left[4((2A_1)^2w)^2 + 
((2A_1)^2w - (2B_1)^2n)^2 - 8(2A_1)^2(2B_1)^2wn\right].
\end{align*}
Since $2B_1$ is odd, this implies that 
\begin{equation}
\label{mod32}
4((2A_1)^2w)^2 + ((2A_1)^2w - (2B_1)^2n)^2 - 8(2A_1)^2(2B_1)^2wn
\equiv 0 \pmod{32}.
\end{equation}
Reducing each term:
since $(2A_1)^4w^2 \equiv 1 \pmod 8$, 
we have that $4((2A_1)^2w)^2 \equiv 4 \pmod{32}$;
since $(2A_1)^2w - (2B_1)^2n \equiv 2 \pmod 8$, 
$((2A_1)^2w - (2B_1)^2n)^2 \equiv 4 \pmod{32}$;
and since
$-(2A_1)^2(2B_1)^2wn \equiv 5 \pmod 8$,
$-8(2A_1)^2(2B_1)^2wn \equiv 8 \pmod{32}$. 
Thus, reducing congruence~(\ref{mod32}), we find
$0 \equiv 4 + 4 + 8 \equiv 16 \pmod{32}$, 
a contradiction, which completes the proof.
\end{proof}


\begin{thebibliography}{refs}

\bibitem{BHV01}
{Y. Bilu, G. Hanrot, and P.M. Voutier}, ``Existence of primitive 
divisors of Lucas and Lehmer numbers, with appendix by M. Mignotte",
{\it J. Reine Angew. Math.} {\bf 539}  (2001) 75--122.

\bibitem{BorSha}
{Z.I. Borevich and I.R. Shafarevich}, 
{\it Number Theory},
Academic Press, New York, 1966.

\bibitem{HL99}
{C. Heuberger and M.H. Le}, ``On the generalized Ramanujan-Nagell 
equation $x^2+D=p^z$",
{\it J. Number Theory} {\bf 78} (1999), no.\ 3, 312--331.

\bibitem{Lu02}
{F. Luca}, ``On the equation $x^2+2^a 3^b=y^n$",
{\it Int. J. Math. Math. Sci.} {\bf 29} (2002), no.\ 4, 239--244.

\bibitem{LS12}
{F. Luca and G. Soydan}, {``On the Diophantine equation
$2^m + nx^2 = y^n$"},
{\it Journal of Number Theory} {\bf 132} (2012), 2604--2609.

\bibitem{Nark}
{W. Narkiewicz}, 
{\it Elementary and Analytic Theory of Algebraic Numbers, 3rd ed.}, 
Springer, New York, 2004.

\bibitem{Vo95}
{P.M. Voutier}, {``Primitive divisors of Lucas and Lehmer sequences"},
{\it Math. Comp.} {\bf 64} (1995) 869--888.

\bibitem{WW11}
{Y. Wang and T. Wang}, 
``On the Diophantine equation $nx^2+2^{2m}=y^n$", 
{\it J. Number Theory} {\bf 131} (2011) 1486--1491.


\end{thebibliography}
\end{document}